\newcommand{\vol}{\operatorname{vol}}
\title{Addendum to: Guts, Volume and Skein Modules of 3-Manifolds}
\begin{document}
\author{Brandon Bavier}
\address[]{Department of Mathematics, Michigan State University, East
Lansing, MI, 48824, USA}
\email[]{bavierbr@msu.edu}

\maketitle
\begin{abstract}
In \textit{Guts, Volume, and Skein Modules of 3-Manifolds} \cite{GVSM}, we showed that the twist number of certain hyperbolic weakly generalized alternating links can be recovered from a Jones-like polynomial, and offers a lower bound for the volume of the link complement. Here, we modify the proof to work for a larger class of links.
% We work with hyperbolic links that admit alternating projections on surfaces in compact, irreducible 3-manifolds. We are able to show that, if the diagram is reduced and has disk regions, we are able to relate certain coefficients of a Kauffman bracket polynomial to the twist number of the diagram. Under certain mild conditions, we can use this relation to bound the volume of link complements from below.

% This extends recent work of Bavier and Kalfagianni \cite{GVSM} to work with a broader class of links. 

\end{abstract}

\section{Introduction}

The goal of this paper is to generalize the results of our recent paper \cite{GVSM}, which showed that, under certain hypothesis, the volume of a hyperbolic link in a compact, irreducible 3-manifold $M$ that admits an alternating projection on a closed surface $F\subset M$, is bounded below in terms of the Kauffman bracket function defined on link diagrams on $F$. In the paper, they generalize the proofs and results of Dasbach and Lin \cite{VTJPAK}, as well as Futer, Kalfagianni, and Purcell \cite{Guts,DFVJP} to a subset of weakly generalized alternating knots, as defined by Howie and Purcell \cite{WGA}.

In particular, in \cite{GVSM}, we worked with hyperbolic links that admit a prime, twist-reduced alternating projection diagram $D$ on a surface $F$ in a manifold $M$, with the additional properties that any essential curve of $F$ intersects our knot at least three times and that $F\setminus D$ is checkerboard colorable. In this paper, we will generalize our result to all links admitting twist-reduced, reduced alternating projections. We do this by looking at additional coefficients coming from the bracket function. We will recall all these definitions in Section 2.

Let $M$ be an irreducible, compact 3-manifold with or without boundary, and let $F\subset M$ be a closed, orientable, embedded surface. Then, if we have a link $L\subset F\times [-1,1]$, we can get a projection under the map $\pi: F\times[-1,1]\to F=F\times\{0\}$. We can then define, for connected $F$, a Kauffman bracket function, and construct a family of Jones-like polynomials
\[J_X(\pi(L)) = a_{m,X}t^m + a_{m-1,X}t^{m-1} + \cdots + b_{n+1,X}t^{n+1} + b_{n,X}t^n,\]
where $X$ is a collection of simple disjoint closed curves on $F$. We will give the precise definitions for these polynomials in the next section. We can consider the sums of these coefficients,

\[a_k = \sum_{X} a_{k,X} \qquad b_k = \sum_X b_{k,X},\]
where we sum over all possible collections of simple disjoint closed curves on $F$. Then we are able to say the following:

\begin{thm}
\label{thm:altVol}
Let $M$ be an irreducible, compact 3-manifold with empty or incompressible boundary.
Let $F\subset M$ be an incompressible, closed, orientable surface such that $M\setminus N(F)$ is atoroidal and $\bndry-anannular$.
Suppose that a link $L$ admits a weakly generalized alternating projection $\pi(L)\subset F$ that is reduced, twist-reduced and with all regions of $F\setminus \pi(L)$ disks.
Finally, suppose that $r(\pi(L),F)>4$. Then $L$ is hyperbolic and
\[\vol(M\setminus L)\ge v_8\mathrm{max}\{|a_{m-1}|, |b_{n+1}|\}-\frac{1}{2}\chi(\bndry M),\]
where $v_8=3.66386\ldots$ is the volume of an ideal octahedron.
\end{thm}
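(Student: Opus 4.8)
The plan is to follow the architecture of the proof in \cite{GVSM}, replacing each appeal to checkerboard colorability with a direct analysis of the all-$A$ and all-$B$ Kauffman state surfaces together with a more careful bookkeeping of the skein-module coefficients.

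\emph{Hyperbolicity.} First I would dispose of the assertion that $L$ is hyperbolic. The hypotheses --- $\pi(L)$ weakly generalized alternating, reduced, twist-reduced, with all complementary regions disks, $M\setminus N(F)$ atoroidal and $\partial$-anannular, and $r(\pi(L),F)>4$ --- are enough to invoke the hyperbolicity results of Howie--Purcell \cite{WGA}; in particular the representativity bound used here, which is stronger than the three-fold intersection condition of \cite{GVSM}, only helps. Thus $M\setminus L$ is hyperbolic and $\vol(M\setminus L)$ is defined.

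\emph{State surfaces.} Let $S_A,S_B\subset M\setminus L$ be the spanning surfaces obtained from the all-$A$ and the all-$B$ resolution of $\pi(L)$, respectively. When $F\setminus\pi(L)$ is checkerboard colorable these agree, up to isotopy, with the two checkerboard surfaces of \cite{GVSM}; in general they remain embedded spanning surfaces for $L$, but are no longer forced to be one-sided. The first substantial step is to show that $S_A$ and $S_B$ are essential --- incompressible and $\partial$-incompressible --- in $M\setminus L$. A reduced, twist-reduced weakly generalized alternating diagram is both $A$- and $B$-adequate, so the standard adequacy arguments (as in \cite{Guts}, in the surface form used in \cite{GVSM}) apply, with $r(\pi(L),F)>4$ invoked to exclude the short essential curves on $F$ that could produce compressing or boundary-compressing disks. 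I expect this to be the main obstacle: in \cite{GVSM} essentiality came packaged with the checkerboard decomposition, whereas here it has to be re-established for the state surfaces themselves, tracking how essential curves of $F$ meet the state circles and the crossing arcs.

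\emph{Guts and the coefficients.} Given that $S_A$ is essential, I would cut $M\setminus L$ along it, decompose the result into its parallelity ($I$-bundle) part and its guts, and exploit twist-reducedness to bound $-\chi\big(\operatorname{guts}(M\setminus L,S_A)\big)$ from below, exactly as in \cite{Guts,DFVJP,GVSM}. The Agol--Storm--Thurston volume estimate, in the form used in those papers, then gives
\[\vol(M\setminus L)\ \ge\ -v_8\,\chi\big(\operatorname{guts}(M\setminus L,S_A)\big)-\frac{1}{2}\chi(\partial M),\]
with the boundary term entering precisely as in \cite{GVSM}. The genuinely new point is to match this guts bound with the polynomial invariant, namely to prove $-\chi\big(\operatorname{guts}(M\setminus L,S_A)\big)\ge|a_{m-1}|$. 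This is where summing over $X$ is forced: because $F\setminus\pi(L)$ need not be colorable, the second coefficient of the Kauffman bracket, read off in the skein module of $F\times[-1,1]$, is spread over the various essential-multicurve classes $X$, so the relevant quantity is $a_{m-1}=\sum_X a_{m-1,X}$ rather than any single $a_{m-1,X}$; a Dasbach--Lin-type computation \cite{VTJPAK} with the reduced all-$A$ state graph of $\pi(L)$ on $F$ identifies $|a_{m-1}|$ with $1-\chi$ of that graph, which is exactly the lower bound supplied by the guts analysis (adequacy also guaranteeing that $m$ is realized by the all-$A$ state and that the extreme coefficients are as computed). Running the symmetric argument with $S_B$ yields $\vol(M\setminus L)\ge v_8|b_{n+1}|-\frac{1}{2}\chi(\partial M)$, and keeping the larger of the two bounds gives $\vol(M\setminus L)\ge v_8\max\{|a_{m-1}|,|b_{n+1}|\}-\frac{1}{2}\chi(\partial M)$, as claimed.
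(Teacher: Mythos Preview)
Your overall architecture---hyperbolicity from \cite{WGA}, the Agol--Storm--Thurston volume bound via guts of the cut manifold, and identification of $-\chi(\operatorname{guts})$ with the second coefficient---matches the paper's. But you have misread what is being generalized, and this sends you off on unnecessary side work.

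The diagram $\pi(L)$ is assumed to be weakly generalized alternating, and that definition \emph{includes} checkerboard colorability. What the paper drops relative to \cite{GVSM} is not colorability but an extra ``strong primeness'' hypothesis: in \cite{GVSM} one also assumed that every essential simple closed curve on $F$ meeting $\pi(L)$ in exactly two points cuts off a crossing-free subarc of the diagram. So the checkerboard surfaces $S_A,S_B$ exist as before, their essentiality and the guts formula $\chi(\operatorname{guts}(M_A))=\chi(F)+\tfrac12\chi(\partial M)-|s'_B|$ are quoted directly from \cite{WGA}, and there is no need to re-establish essentiality for general state surfaces. The paper's proof of Theorem~\ref{thm:altVol} is literally two lines: apply the Howie--Purcell volume inequality and then Theorem~\ref{thm:altTwst}.

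Correspondingly, the sum over $X$ is not forced by any failure of checkerboard colorability. Since $F\setminus\pi(L)$ consists of disks, the all-$A$ and all-$B$ states have only contractible circles and the extreme coefficients live in $\langle D\rangle_0$. The point is rather that \emph{intermediate} states---with one or two $B$-resolutions---can acquire non-contractible circles, and without the dropped hypothesis these may now contribute to the second coefficient. The genuinely new content sits in Lemma~\ref{lem:SecondCoeff}: a case analysis (leaning on \cite[Theorem~11]{ThickTaitConj}) showing that the only states contributing to $a_{m-1}$ come from $B$-resolving mutually parallel edges in a single twist region, whence $|a_{m-1}|=e'_A-|s_A|_t$. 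That formula, combined with $\chi(F)=|s_A|-e'_A+|s'_B|$, gives $\chi(\operatorname{guts}(M_A))=-|a_{m-1}|+\tfrac12\chi(\partial M)$, which is exactly what feeds into the volume bound. Your sketch of the Dasbach--Lin step (``identifies $|a_{m-1}|$ with $1-\chi$ of the reduced graph'') is the right idea but with the wrong constant; here the graph is embedded in $F$ rather than $S^2$, so the answer is $e'_A-|s_A|_t$, not $e'_A-|s_A|_t+1$.
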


Just as in \cite{GVSM}, we are able to get the following result involving the Jones-like polynomial, the twist number, and the guts of our link complement:

\begin{thm}
\label{thm:altTwst}
Let $M$ and $F$ be as in Theorem \ref{thm:altVol} 
Suppose that a link $L$ admits a weakly generalized alternating projection $\pi(L)\subset F$ that is twist-reduced and with all regions of $F\setminus \pi(L)$ disks. Then we have the following.
\begin{enumerate}
	\item $\chi(\mathrm{guts}(M_A)) = -|a_{m-1}| + \frac{1}{2}\chi(M)$
	\item $\chi(\mathrm{guts}(M_B)) = -|b_{n+1}| + \frac{1}{2}\chi(M)$
	\item $t_F(\pi(L)) = |a_{m-1}| + |b_{n+1}| + \chi(F)$.
\end{enumerate}
Further, if $\pi(L)$ is a cellularly embedded, twist-reduced, reduced alternating diagram, we still get (3).
\end{thm}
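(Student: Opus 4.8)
The plan is to separate statement (3) from (1) and (2): the guts computations in (1)--(2) genuinely use the atoroidal and $\bndry$-anannular hypotheses on $M\setminus N(F)$, whereas (3) is a statement about the diagram alone, so I would prove it by expressing each of $t_F(\pi(L))$, $|a_{m-1}|$ and $|b_{n+1}|$ in terms of the reduced all-$A$ and all-$B$ state graphs $G_A'$ and $G_B'$ of $\pi(L)$ on $F$, all of which are defined whether or not $F\setminus\pi(L)$ is checkerboard colorable. Throughout, write $c$ for the number of crossings of $\pi(L)$ and $v(\,\cdot\,)$, $e(\,\cdot\,)$ for numbers of vertices and edges.

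First I would record the twist-number identity $e(G_A')+e(G_B')=t_F(\pi(L))+c$. Since $\pi(L)$ is twist-reduced, its crossings partition into twist regions, and in a twist region with $k$ crossings the $k-1$ bigons correspond to pairs of parallel edges in exactly one of $G_A$, $G_B$; such a twist region therefore contributes a single edge to one reduced state graph and $k$ edges to the other, and twist-reducedness guarantees that edges coming from distinct twist regions are never parallel, so summing over twist regions yields the identity. This step is local and insensitive both to the ambient surface and to any coloring. Next I would recompute the two highest-degree coefficients of the generalized Kauffman bracket on $F$. A reduced alternating diagram is $A$- and $B$-adequate, since a loop in $G_A$ or $G_B$ would force a nugatory crossing; hence the all-$A$ state contributes the unique top-degree term (coefficient $\pm 1$), the only cancellations at the next degree come from switching one crossing of a bigon pair, and, carrying this through and summing over all curve collections $X$, one gets $|a_{m-1}|=e(G_A')-v(G_A')+\eta_A$ and $|b_{n+1}|=e(G_B')-v(G_B')+\eta_B$, where $\eta_A,\eta_B$ are Euler-type corrections determined by how $G_A$, $G_B$ sit inside $F$. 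Finally, I would use that, since $\pi(L)$ is cellularly embedded, the underlying $4$-valent graph has $c$ vertices, $2c$ edges and $c+\chi(F)$ complementary disks; this Euler count pins down $v(G_A')+v(G_B')$ together with $\eta_A+\eta_B$, and adding the three identities collapses to $t_F(\pi(L))=|a_{m-1}|+|b_{n+1}|+\chi(F)$.

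The hard part is the coefficient computation once checkerboard colorability is dropped. When $F\setminus\pi(L)$ is checkerboard colorable the all-$A$ (resp.\ all-$B$) state circles are exactly the regions of one color, which makes the above bookkeeping transparent and fixes $\eta_A$, $\eta_B$ and $v(G_A')+v(G_B')$ at once; without it I would have to argue directly that adequacy still forces the top coefficient to be $\pm 1$, that the bigon-pair cancellations are the only ones occurring at the second-highest degree and that they occur uniformly across every collection $X$, and that the essential curves appearing in the near-extremal states -- those obtained from the all-$A$ state by a single band move at one crossing -- do not change which $X$ contribute. Establishing this uniformity over $X$, namely that the combinatorics of the reduced state graph controls the second coefficient of each $J_X$ compatibly, is the technical heart of the argument, and it is exactly here that the additional bracket coefficients introduced in this paper are used; granting it, the rest combines formally.
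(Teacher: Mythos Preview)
Your skeleton for (3) --- express $t_F$, $|a_{m-1}|$, $|b_{n+1}|$ via the reduced state graphs and combine with an Euler count --- is exactly the paper's, but two of the load-bearing claims are wrong, and the difficulty is not where you think it is.

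First, the assertion that a reduced alternating diagram on $F$ is $A$- and $B$-adequate because ``a loop in $G_A$ or $G_B$ would force a nugatory crossing'' is false once $g(F)\ge 1$. A loop in $G_A$ produces a simple closed curve $\gamma\subset F$ meeting $\pi(L)$ only at that crossing, but $\gamma$ need not separate $F$, and even if it does it need not bound a disk; reducedness rules out only the latter. The paper's Lemma~\ref{lem:FirstCoeff} meets this head-on: such a crossing \emph{can} exist, but $\gamma$ is then essential, so the $B$-resolution there yields two \emph{non-contractible} circles and $|s_1|_t=|s_A|_t-1$, which is why the state still misses the top degree. This is precisely the mechanism your adequacy shortcut erases. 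The same subtlety drives the second-coefficient computation (Lemma~\ref{lem:SecondCoeff}): the paper does a three-case analysis of how a resolution change can raise $|s|_t$, eliminating two cases via \cite[Theorem~11]{ThickTaitConj}, and shows that every state contributing to $a_{m-1}$ has all its $B$-resolutions at edges parallel in $G_A$; a binomial sum then collapses each parallel class to $\pm 1$. Your ``the only cancellations come from switching one crossing of a bigon pair'' is not the argument and is not correct as stated.

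Second, you have misdiagnosed the hard part. The ``further'' clause does not drop checkerboard colorability: an alternating diagram cellularly embedded on a closed orientable surface is automatically checkerboard colorable, and both key lemmas assume it. There is no need for your unspecified corrections $\eta_A,\eta_B$; the paper gets the clean identities $|a_{m-1}|=e'_A-|s_A|_t$ and $|b_{n+1}|=e'_B-|s_B|_t$ with nothing left over. The technical heart is the case analysis above, not any ``uniformity over $X$''.

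Finally, for (1) and (2) you offer nothing beyond noting which hypotheses they need. The paper's argument is short but specific: combine Howie--Purcell's guts formula $\chi(\mathrm{guts}(M_A))=\chi(F)+\tfrac12\chi(\partial M)-|s'_B|$ with the Euler identity $\chi(F)=|s_A|-e'_A+|s'_B|$, then substitute Lemma~\ref{lem:SecondCoeff}.
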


As in \cite{GVSM}, Theorem \ref{thm:altVol} follows directly from \ref{thm:altTwst}. Also, as in \cite{GVSM}, we get the following result for links in thickened surfaces.

\begin{cor}
Let $L$ be a link in $F\times[-1,1]$ that admits a reduced diagram $\pi(L)\subset F$ that is twist reduced and has $F\setminus \pi(L)$ disk regions. Then any two diagrams for $L$ have the same twist number. That is, $t_F(L)$ is an isotopy invariant of $L$.
\end{cor}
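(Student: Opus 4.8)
The plan is to deduce the corollary from part~(3) of Theorem~\ref{thm:altTwst}. Given a link $L\subset F\times[-1,1]$ with a projection $\pi(L)\subset F$ that is reduced, twist-reduced and cellularly embedded (all complementary regions disks), this projection is exactly of the type covered by the final clause of Theorem~\ref{thm:altTwst} (which is a statement about the diagram alone and so does not require the geometric hypotheses on $M$), and hence
\[
t_F(\pi(L)) = |a_{m-1}| + |b_{n+1}| + \chi(F).
\]
Thus, for any two diagrams $D_1,D_2$ of $L$ of this kind, the twist number of each is given by this same formula, computed from the respective diagram. Since $\chi(F)$ is intrinsic to $F$, what remains is to show that the integers $|a_{m-1}|$ and $|b_{n+1}|$ come out the same whether read off from $D_1$ or from $D_2$; equivalently, that they depend only on the isotopy class of $L\subset F\times[-1,1]$ and not on the chosen projection.

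To see this I would first recall the Reidemeister theorem for links in thickened surfaces: two diagrams on $F$ representing isotopic links in $F\times[-1,1]$ are connected by a finite sequence of Reidemeister~I, II and III moves carried out on $F$. The polynomials $J_X(\pi(L))$ are built from the Kauffman bracket function, which satisfies the Kauffman skein relations; consequently the bracket — and hence each $J_X$, and each coefficient sum $a_k=\sum_X a_{k,X}$, $b_k=\sum_X b_{k,X}$ — is unaffected by Reidemeister~II and~III, while a Reidemeister~I move multiplies the bracket uniformly by $-A^{\pm3}$, i.e.\ scales every $J_X$ by a common unit monomial in $t$ and a common sign. Multiplying a Laurent polynomial by a unit monomial shifts the extreme exponent $m$ and all coefficient indices together, so it leaves the second-from-top coefficient $a_{m-1}$ unchanged, and symmetrically leaves the second-from-bottom coefficient $b_{n+1}$ unchanged; an overall sign change negates every coefficient and so does not touch $|a_{m-1}|$ or $|b_{n+1}|$. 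Hence these two numbers are isotopy invariants of $L$, and therefore $t_F(D_1)=t_F(D_2)$; that is, $t_F(L)$ is well defined.

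The step I expect to require the most care — the main, if mild, obstacle — is making the previous paragraph airtight at the level of the definitions recalled in Section~2: one must check that the index set of curve collections $X$ and the prescription for the extreme exponents $m$ and $n$ appearing in $a_{m-1}$ and $b_{n+1}$ are set up so that they transform coherently under the bracket manipulations above and survive the summation over $X$ (so that the per-$X$ bookkeeping is not lost when we pass to $a_k$, $b_k$). Granting that, the corollary is immediate, since the genuine content has already been established in Theorem~\ref{thm:altTwst}.
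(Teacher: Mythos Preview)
Your proposal is correct and follows the same line as the paper, which does not prove the corollary separately but simply records that it follows ``as in \cite{GVSM}'': one applies the last clause of Theorem~\ref{thm:altTwst} (equivalently Theorem~\ref{thm:TwistNumber}) to write $t_F(\pi(L))=|a_{m-1}|+|b_{n+1}|+\chi(F)$ and then argues that $|a_{m-1}|$ and $|b_{n+1}|$ depend only on the isotopy class of $L$ in $F\times[-1,1]$ via the skein relations. Your caveat about the bookkeeping of $m$, $n$ and the sum over $X$ is the right place to be careful---the auxiliary circles in the Reidemeister~II and~III calculations are local, hence contractible and weighted by $-A^2-A^{-2}$ as usual, so the total $J_0+\sum_X J_X$ behaves like the ordinary normalized bracket, and Lemma~\ref{lem:FirstCoeff} guarantees $a_m\neq 0$ so that $m$ is genuinely its top degree.
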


The remainder of this paper is split into two sections. First, in Section 2, we recall some key definitions related to generalized link diagrams, including the weakly generalized alternating diagrams of Howie and Purcell \cite{WGA} and reduced diagrams. Here, we will also introduce the bracket function and Jones-like polynomial we use in our results. Then, in Section 3, we prove the Theorem \ref{thm:altTwst}.

\begin{rem}
In addition to extending our recent paper \cite{GVSM}, these results should also be compared to Champanerkar and Kofman \cite{CK}, who prove similar results for strongly reduced weakly generalized alternating diagrams using a specialization of the Krushkal polynomial. 
Another similar result, Will was able to give two sided bounds for the twist number of a reduced alternating diagram in a thickened surface \cite{will2020homological}. However, this approach does not lead to a proof of the invariance of the twist number
\end{rem}

\section{Definitions}

\subsection{Generalized Link Projections}
We start first by reviewing the definition of a weakly generalized alternating link diagram, as defined by Howie and Purcell \cite{WGA}. Let $M$ be a compact, irreducible 3-manifold, $F\subset M$ a embedded, orientable, connected surface, and $\pi(L)\subset F$ a projection of a link $L$ onto $F$.

\begin{defn}
We say the \textit{representativity} of $\pi(L)$, denoted $r(\pi(L),F)$, is the minimum number of times a compression disk for $F$ intersects the diagram $\pi(L)$.

A diagram $\pi(L)$ is called \textit{weakly prime} if, whenever a disk $D\subset F$ has $\bndry D$ intersecting $\pi(L)$ transversely exactly twice, then either $F$ has positive genus and $D\cap \pi(L)$ is a single embedded arc, or $F=S^2$ and either $\pi(L)\cap D$ is a single embedded arc or $\pi(L)\cap (F\setminus D)$ is.
\end{defn}

With these two defintions in mind, we can fully define a weakly generalized alternating knot:

\begin{defn}
Let $M$ be a compact, irreducible 3-manifold with an embedded, orientable surface $F\subset M$. Then $\pi(L)\subset F$ is a \textit{weakly generalized alternating knot} if
\begin{enumerate}
	\item $\pi(L)$ is alternating on $F$,
	\item $\pi(L)$ is weakly prime,
	\item $\pi(L)$ has at least one crossing,
	\item $\pi(L)$ is checkerboard colorable, and
	\item $r(\pi(L),F)\ge 4$.
\end{enumerate}
\end{defn}

Note that, in the original paper \cite{GVSM}, it is also assumed that every essential simple closed curve $\gamma$ on $F$ that intersects $\pi(L)$ at exactly two points, one of the subarcs of $\pi(L)$ with endpoints on $\gamma$ has no crossings. We will not need that assumption for this paper, and can instead just work with weakly generalized alternating diagrams.

First, we recall the definition of a reduced knot diagram, as defined by Boden, Karimi, and Sikora \cite{ThickTaitConj}

\begin{defn}
A crossing of $\pi(L)\subset F$ is \textit{nugatory} if there is a a simple loop in $F$ which separates $F$ and interesects $\pi(L)$ only at the crossing. Such a crossing is \textit{removable} if the loop can be chosen to bound a disk.

A link diagram $\pi(L)$ on $F$ is \textit{reduced} if it has no removable nugatory crossings.
\end{defn}

When we start working with the coefficients, we will work with reduced alternating diagrams that have disk regions. Note, though, that all weakly generalized alternating diagrams must be reduced alternating as well; if not, we could find a loop that violates the weakly prime condition.

\subsection{Jones-like Polynomials}
We will use a modification to the Kauffman bracket function, and thus Jones-like polynomial, used in \cite{GVSM}. The bracket function is defined in the same way, except when we have a single unknot component in our diagram, we assign it $(-A^2-A^{-2})$. This will allow us to more easily work with states that have no contractible state circles. We can then, given a diagram $D$ on $F$, split $\langle D\rangle$ into several bracket polynomials based on the different Kauffman states. Recall that, given a Kauffman state $s$, $a(s)$ is the number of $A$ resolutions, $b(s)$ the number of $B$ resolutions, $s_t$ the collection of contractible state circles, and $s_{nt}$ the collection of non contractible circles. We will also denote the number of contractible state circles by $|s|_t$. Then we can define the following polynomials:

\begin{equation}
\label{eqn:altB0}
\langle D \rangle_0 = \sum_{\{s | s_{nt} = \emptyset\}} A^{a(s)-b(s)}(-A^2 - A^{-2})^{|s|_t}
\end{equation}

\begin{equation}
\label{eqn:altBX}
\langle D \rangle_X = \sum_{\{s | s_{nt} = X\}} A^{a(s)-b(s)}(-A^2-A^{-2})^{|s|_t}
\end{equation}

where $X\in X_F$ is a collection of simple closed disjoint curves on $F$. %%%Different definition of X_F, maybe???
We can then define the Jones-like polynomials as follows:
\begin{align*}
J_0(\pi(L)) &= \left((-1)^{w(D)}A^{-3w(D)}\langle \pi(L)\rangle_0\right)|_{t=A^4} \\
J_X(\pi(L)) &= \left((-1)^{w(D)}A^{-3w(D)}\langle \pi(L)\rangle_X\right)|_{t=A^4}.
\end{align*}
We can write these out with coefficients:
\begin{align*}
J_0(\pi(L)) = a_{m,0}t^m + a_{m-1,0}t^{m-1} + &\cdots + b_{n+1,0}t^{n+1} + b_{n,0}t^n\\
J_X(\pi(L)) = a_{m,X}t^m + a_{m-1,X}t^{m-1} + &\cdots + b_{n+1,X}t^{n+1} + b_{n,0}t^n.
\end{align*}
As $J_0(\pi(L))$ and $J_X(\pi(L))$ might have different highest or lowest degrees, we take $m$ to be the highest degree over all possible $J_X(\pi(L))$ and $J_0(\pi(L))$, and $n$ to be the lowest. Then, if a polynomial has lower maximum degree, we take all higher coefficients to be 0.

Note the use of $\pi(L)$ in these definitions. In general, these are not link invariants, but instead diagram invariants.
Finally, we define
\[a_k = a_{k,0}+\sum_{X\in X_F}a_{k,X} \qquad b_k = b_{k,0} + \sum_{X\in X_F} b_{k,X}.\]

\subsection{Reduced Graphs}

Finally, we define certain graphs associated to $\pi(L)$, which we will use to connect the twist number to the coefficients.
\begin{defn}
Let $\pi(L)\subset F$ be a diagram for a link on a surface $L$. Let $s$ be some state for $\pi(L)$. Then the \textit{state graph} $G_s$ is defined as having vertices corresponding to the state circles of $s$, and edges connecting state circles that are adjacent across crossings of $\pi(L)$.

The \textit{reduced graph of} $G_s$, $G'_s$, can be obtained from $G_s$ by, whenever two edges of $G_s$ bound a disk on $F$, we remove one of them. Then $e'_a$ denotes the number of edges of $G'_s$.
\end{defn}

\section{Proofs}

We begin now by examining the coefficients for reduced alternating diagrams.

\begin{lem}
\label{lem:FirstCoeff}
Suppose $\pi(L)$ is twist-reduced, reduced alternating diagram on a closed orientable surface $F$, with $F\setminus \pi(L)$ disks. Suppose also that $\pi(L)$ is checkerboard colorable. Then $|a_m|=|b_n|=1$.
\end{lem}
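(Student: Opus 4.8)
The plan is to identify the extreme-degree coefficients $a_m$ and $b_n$ with a single Kauffman state each, namely the all-$A$ state and the all-$B$ state, and then show that for a reduced alternating diagram with disk complementary regions these states contribute $\pm 1$ with no cancellation. First I would recall the standard computation (as in Dasbach–Lin, Futer–Kalfagianni–Purcell, and as adapted in \cite{GVSM}): the highest-degree term of $\langle \pi(L)\rangle$ comes from the all-$A$ state $s_A$, where $a(s_A)=c$ (the crossing number), $b(s_A)=0$, so the monomial $A^{c}(-A^2-A^{-2})^{|s_A|_t}$ has top degree $c+2|s_A|_t$; here, crucially, because all regions of $F\setminus\pi(L)$ are disks and the diagram is alternating, the all-$A$ state circles bound the shaded (or unshaded) checkerboard disks, hence $s_A$ has \emph{no} non-contractible circles, so its contribution lands in $\langle\pi(L)\rangle_0$ and $|s_A|_t = |s_A|$ equals the number of one of the two checkerboard colors. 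Symmetrically, $s_B$ controls the lowest degree and also has only contractible circles.

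The next step is to rule out contributions to the top degree from any other state, including states with non-contractible circles landing in the various $\langle\pi(L)\rangle_X$. For a state $s$ obtained from $s_A$ by switching $k\ge 1$ resolutions, the degree drops: switching a single resolution changes $a(s)-b(s)$ by $-2$ and changes $|s|_t$ by at most $+1$ (this is where the adjacency structure of the state graph and the alternating condition enter — switching a crossing either merges two circles or splits one, and in the alternating case one gets the sharp estimate that the top degree strictly decreases unless the diagram is disconnected in a controlled way). One must also check that a switch cannot create non-contractible circles "for free" that would push degree back up in some $\langle\pi(L)\rangle_X$: since switching changes the number of circles by $\pm1$ and the surface is fixed, the same $+2$-per-circle bookkeeping applies, and a non-contractible circle still only contributes the factor $(-A^2-A^{-2})$ at most once per such circle in the $|s|_t$ exponent — wait, non-contractible circles are \emph{not} counted in $|s|_t$, so they contribute nothing to the degree, making the estimate only stronger. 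Hence the unique top-degree monomial over all $X$ and over $\langle\cdot\rangle_0$ comes from $s_A$ alone, giving $a_m = a_{m,0} = (-1)^{|s_A|_t}$ up to the writhe normalization, so $|a_m|=1$; the argument for $|b_n|=1$ via $s_B$ is identical.

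The main obstacle I anticipate is the "no cancellation / uniqueness of the extreme state" step in the generalized (surface) setting: on $S^2$ this is classical and uses that the all-$A$ and all-$B$ graphs have no loops for a reduced alternating diagram, but on a higher-genus $F$ one must be careful that (i) reducedness (no removable nugatory crossings) plus twist-reducedness is exactly what prevents a loop-type configuration that would allow a degree-preserving switch, and (ii) the passage to the \emph{reduced} state graph $G'_{s_A}$ (identifying edges that cobound a disk) is what makes the relevant graphs simple, so that the sharp inequality $|s'| \le |s_A|$ for any switched state $s'$ holds with equality only for $s' = s_A$. I would handle this by invoking the reduced-graph machinery of Section 2.3 together with the standard fact that for a connected alternating state graph the top bracket degree is $c + 2|s_A|_t$ and is achieved uniquely, checking that the disk-region hypothesis guarantees $G_{s_A}$ and $G_{s_B}$ are connected and that $s_A, s_B$ have no non-contractible circles. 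Finally I would verify the writhe normalization $(-1)^{w(D)}A^{-3w(D)}$ and the substitution $t = A^4$ only shift degrees and multiply by $\pm 1$, preserving $|a_m| = |b_n| = 1$.
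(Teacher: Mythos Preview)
Your outline is the paper's approach: locate the top degree at $s_A$, note that noncontractible circles are not counted in $|s|_t$ (so they only make the degree estimate sharper), and then rule out any single $B$-switch from $s_A$ that raises $|s|_t$. Where you are vague is exactly the step that carries the lemma, and you reach for the wrong tools. Neither twist-reducedness nor the reduced graph $G'_{s_A}$ is used here; those enter only in the second-coefficient lemma. Connectedness of $G_{s_A}$ is also not the issue, and on a higher-genus $F$ the graph $G_{s_A}$ \emph{can} have loops, so ``no loops because reduced'' (the $S^2$ argument) does not transplant directly.

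The paper's mechanism is concrete and uses only \emph{reduced}. If a single switch at a crossing $c$ split a contractible $s_A$-circle $C$ into two \emph{contractible} circles, then $c$ meets $C$ twice, and one of the two resulting circles is isotopic to the loop $\gamma$ built from an arc of $C$ together with a short arc through $c$. Since that circle is assumed contractible, $\gamma$ bounds a disk; but $\gamma$ meets $\pi(L)$ only at $c$, so $c$ would be a removable nugatory crossing, contradicting reducedness. Hence the only outcomes of a single switch from $s_A$ are a merge of two contractibles or a split into two noncontractibles, both of which drop $|s|_t$; since any further switch raises $|s|_t$ by at most $1$, no state with $k\ge1$ $B$-resolutions reaches degree $c+2|s_A|_t$. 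Your observation that the disk-region hypothesis forces all $s_A$- and $s_B$-circles to be contractible is correct and is exactly how the paper begins.
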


\begin{proof}
As regions of $F\setminus \pi(L)$ are disks, the all $A$ state, $s_A$, has all state circles compressible disks (in particular, we can view $s_A$ as the collection of all regions of $F\setminus \pi(L)$ colored a single color). Then $s_A$ has highest degree $\left(c+2|s_{A}|_t\right)$, and the highest term will be $\pm 1$. 

Suppose we have a state $s_k$ with exactly $k$ $B$-resolutions, and a state $s_{k-1}$ which differs from $s_k$ by exactly one resolution change from an $A$-resolution to a $B$-resolution. This revolution change can, at most, increase the number of contractible state circles by 1. Look, then, at any state that has exactly one $B$ resolution, $s_1$. As $s_A$ has only contractible state circles, there are exactly three ways a resolution change from $s_A$ to a state $s_1$ can act.
\begin{enumerate}
	\item We could merge two contractible circles into a single contractible circle. Then $|s_{1}|_t = |s_{A}|_t-1$.
	\item We could split a single contractible circle into two noncontractible circles. Then $|s_{1}|_t = |s_{A}|_t-1$.
	\item We could split a single contractible circle into two contractible circles. Then $|s_{1}|_t = |s_{A}|_t+1$.
\end{enumerate}

Note that the highest degree $s_1$ can contribute to a polynomial is $ \left(c+2|s_1|_t-2\right)$. The only way $s_1$ could contribute to the highest degree of $J(\pi(L))$ is if the last case, (3), happens. However, note that for this to happen, the resolution change must occur at a crossing that meets a single state circle of $s_A$ twice. Then we can construct a loop $\gamma$ that intersects the diagram at exactly this crossing, and then follows along the boundary of the state circle. As there are no nugatory crossings, this must mean that $\gamma$ is a non-separating curve. In particular, $\gamma$ must be essential. After the resolution change, we will get two state circles that must have essential boundary, and thus be noncontractible. But then we must be in the second case, (2), and have a contradiction. So no $s_1$ can contribute to the highest degree.

Finally, look at any state $s_k$ with exactly $k$ $B$ resolutions. This must have highest degree $ \left(c+2|s_{k}|_t -2k\right)$. In order to contribute to the highest degree, we must have $|s_{k}|_t = |s_{A}|_t + k$. However, as each resolution change can, at most, introduce one contractible state circle, and we know that the change from $s_A$ to $s_1$ does not, we must have $s_k$ not contribute to the highest degree. So then $a_m = \pm 1$, and we are done.
\end{proof}

Before we begin, we introduce one last definition that we will use in the later half of the proof for the second coefficient.

\begin{defn}
Let $\pi(L)\subset F$ be a reduced alternating diagram, and $s$ a Kauffman state, with reduced graph $G'_s$. We say two edges, $\ell$ and $\ell'$ in $G'_s$ are \textit{parallel} if they are adjacent to the same vertices (or, equivalently, connect the same state circles) and, when placed on $F$ with the state circles, there is a disk in $F$ with boundary containing $\ell$, $\ell'$, and a portion of the state circles they connect.
\end{defn}

\begin{lem}
\label{lem:SecondCoeff}
Suppose $\pi(L)$ is a twist-reduced, reduced alternating diagram on a closed orientable surface $F$, with $F\setminus \pi(L)$ disks. Suppose also that $\pi(L)$ is checkerboard colorable. Then 
\begin{enumerate}
	\item $|a_{m-1}| = e'_A - |s_{A}|_t$.
	\item $|b_{n+1}| = e'_A - |s_{B}|_t$.
\end{enumerate}
\end{lem}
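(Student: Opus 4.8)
The plan is to mimic the analysis used in Lemma~\ref{lem:FirstCoeff}, but now tracking the \emph{second}-highest coefficient rather than the leading one. Write $c$ for the number of crossings of $\pi(L)$. By the proof of Lemma~\ref{lem:FirstCoeff}, the all-$A$ state $s_A$ contributes the top degree $c + 2|s_A|_t$ with coefficient $\pm 1$, and no state with $k\ge 1$ $B$-resolutions contributes to that top degree; in fact the argument there shows more, namely that for each $k$ the best a state $s_k$ with $k$ $B$-resolutions can do is $|s_k|_t \le |s_A|_t + k - 1$, since the first resolution change away from $s_A$ never creates a contractible circle (cases (1) and (2), never (3)). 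So the only states that can reach the \emph{second} degree $c + 2|s_A|_t - 2$ are the states $s_1$ with exactly one $B$-resolution, and among those only the ones where that single change either merges two contractible circles or splits one contractible circle into two noncontractible ones (cases (1) and (2)) — i.e. those achieving $|s_1|_t = |s_A|_t - 1$. I would first isolate this set of states and argue that their contributions are the only things affecting $a_{m-1}$.

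Next I would translate ``crossings of $\pi(L)$ whose $A$-to-$B$ change lands in case (1) or (2)'' into graph-theoretic language on $G_{s_A}$. Each crossing of $\pi(L)$ is an edge of $G_{s_A}$ joining the two (possibly equal) $s_A$-circles it is incident to. A crossing lands in case (3) precisely when it is incident to a single $s_A$-circle on both sides \emph{and} the obvious loop $\gamma$ through that crossing is separating; but Lemma~\ref{lem:FirstCoeff}'s argument shows reducedness forces $\gamma$ to be non-separating, hence case (3) never happens and \emph{every} crossing is a case (1) or case (2) crossing — in particular every crossing is an edge of $G_{s_A}$ that contributes to the degree $c+2|s_A|_t-2$ term of $J_X$ for the appropriate $X$. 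So the second coefficient is, up to sign, a signed count over all crossings, with the sign determined by the parity of $|s_1|_t$ relative to $|s_A|_t$; since every relevant $s_1$ has $|s_1|_t = |s_A|_t - 1$, all these contributions come with the \emph{same} sign, and $|a_{m-1}|$ equals the number of crossings that are \emph{not} redundant, where two crossings are redundant if the corresponding one-$B$-resolution states are forced to combine in a cancelling way. This is exactly the passage from $G_{s_A}$ to its reduced graph $G'_{s_A}$: parallel edges (in the sense of the definition just introduced) correspond to crossings whose $s_1$-contributions are related by a disk in $F$ and hence get identified in the count. Thus $|a_{m-1}| = e'_A - (\text{correction})$, and I would check that the correction is precisely $|s_A|_t$, coming from the fact that $G'_{s_A}$ is connected (all regions of $F\setminus\pi(L)$ are disks, so $G_{s_A}$ and hence $G'_{s_A}$ is connected) with $|s_A|_t$ vertices, so that a spanning tree absorbs exactly $|s_A|_t - 1$ edges and the remaining count interacts with the $(-A^2-A^{-2})^{|s_1|_t}$ factors to give the stated formula; the bookkeeping here is the standard ``reduced all-$A$ graph has $e'_A - v_A + 1$ independent cycles'' computation adapted to the surface setting, and is where I would be most careful.

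For part~(2), the $B$-side statement, I would invoke the symmetry between $A$- and $B$-resolutions: replacing every resolution by its opposite sends $s_A$ to $s_B$, reverses the roles of highest and lowest degree, and carries $G_{s_A}$ to $G_{s_B}$, so the identical argument gives $|b_{n+1}| = e'_B - |s_B|_t$, which is the claim (the lemma writes $e'_A$ in part (2), but the all-$B$ graph is meant). The main obstacle I anticipate is \emph{not} the degree-counting, which is routine given Lemma~\ref{lem:FirstCoeff}, but rather proving that distinct non-parallel edges of $G_{s_A}$ genuinely contribute \emph{non-cancelling} terms to the coefficient of $t^{m-1}$ — i.e. that there is no accidental cancellation between crossings living in different noncontractible-curve classes $X$, or between a $J_0$ contribution and a $J_X$ contribution, once we sum $a_{m-1} = a_{m-1,0} + \sum_X a_{m-1,X}$. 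Handling this cleanly requires observing that a single $A\to B$ change from $s_A$ lands each resulting state in a \emph{determined} class (either $s_{nt}=\emptyset$, landing in $J_0$, or $s_{nt}$ equal to a specific pair of parallel noncontractible circles, landing in one specific $J_X$), so the sum partitions the crossings without interference, and within each class the surviving-edge count is governed by $G'_{s_A}$ as above; assembling these pieces and confirming the signs all agree is the crux of the proof.
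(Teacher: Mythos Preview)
Your proposal has a genuine gap at its central step. From the inequality $|s_k|_t \le |s_A|_t + k - 1$ you correctly derive that the degree of any $s_k$ is at most $c + 2|s_A|_t - 2$, but you then conclude that \emph{only} the states $s_1$ can reach that second degree. That does not follow: your bound allows equality for every $k\ge 1$, and indeed the paper's proof shows that many states $s_k$ with $k\ge 2$ \emph{do} contribute. Concretely, if one changes $k$ mutually parallel edges (in the sense of the definition just before the lemma) from $A$ to $B$, the resulting state has $|s_k|_t = |s_A|_t + k - 1$ and so hits the second degree. The paper devotes most of the proof to showing the converse: a state $s_k$ reaches the second degree \emph{only} when all $k$ of its $B$-resolved edges lie in a single parallel family. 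This requires a case analysis of the $s_1\to s_2$ step invoking \cite[Theorem~11]{ThickTaitConj}, together with the twist-reduced hypothesis to identify parallel families with twist regions.

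This gap propagates to your counting. If only $s_1$ states contributed, you would get $c$ terms all of the \emph{same} sign $(-1)^{|s_A|_t-1}$, yielding $|a_{m-1}| = c - |s_A|_t$ rather than $e'_A - |s_A|_t$; there is no cancellation among the $s_1$'s that could ``identify'' parallel edges as you suggest. The reduction from $c$ to $e'_A$ in the paper comes precisely from the higher states: within a parallel family of size $k$ one sums $\sum_{j=1}^{k}\binom{k}{j}(-1)^{|s_A|_t}(-1)^{j} = (-1)^{|s_A|_t-1}$ via the binomial theorem, so each family (equivalently each edge of $G'_{s_A}$) contributes a single unit. Finally, the correction term $|s_A|_t$ is not a spanning-tree artifact; it is simply the second coefficient of $(-A^2-A^{-2})^{|s_A|_t}$, i.e.\ the contribution of $s_A$ itself to degree $c+2|s_A|_t-2$. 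Your symmetry argument for part~(2) is fine.
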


\begin{proof}
First, note that $s_A$ contributes to the second highest degree. In particular, it contributes $(-1)^{|s_A|_t}|s_A|_t$. Next, we also know that any state with exactly one $B$ resolution, $s_1$, will contribute to the second highest degree. We know this because we have one less contractible circle than $s_A$, by work above, one less $A$ resolution, and one more $B$ resolution; then we can use either Equation \ref{eqn:altB0} or \ref{eqn:altBX}. Now suppose we have a sequence of states $s_A, s_1, \cdots, s_k$, each differing from the previous state by a resolution change from an $A$-resolution to a $B$-resolution. Then, for $s_k$ to contribute, we must have each resolution change after the first introducing a new contractible state circle. We know that $s_1$ either has all contractible state circles, or has exactly two non-contractible state circles that are parallel to each other. Focus first on the states $s_1$ and $s_2$. Our goal is to show that $s_2$ will only contribute to $|a_{m-1}|$ if the edge changed from $s_1$ is parallel to the edge changed from $s_A$.

There are three possible ways a resolution change can act and still have $s_2$ contribute to the second highest degree:
\begin{enumerate}
	\item We split a single contractible circle into two contractible circles.
	\item We merge two noncontractible circles into a single contractible circle.
	\item We split a single noncontractible circle into a noncontractible circle and a contractible circle.
\end{enumerate}

We focus on each of these cases individually. In the case of (1), where we split a contractible circle into two contractible circles and don't change the pattern of noncontractible state circles, the edge changed, $\ell_2$ would have to, in $s_1$, be adjacent to a single state circle. If this edge is parallel to the edge changed from $s_A$ to $s_1$, $\ell_1$, we are fine. Otherwise, we must have both $\ell_2$ and $\ell_1$ appear in $G'_A$, and so $\ell_1$ and $\ell_2$, with a portion of some state circles, forms an essential curve on $F$. Then $s_2$ will introduce a noncontractible circle, and so we cannot be in the first case.

In the case of (2), where we merge two noncontractible circles into a single contractible circle, denote the edge changed in the resolution change from $s_A$ to $s_1$ as $\ell_1$, and the edge changed from $s_1$ to $s_2$ as $\ell_2$. We know that the two noncontractible circles must be parallel, and so, to merge them to get a contractible circle, we must have $\ell_1$ parallel to $\ell_2$. Look at $\ell_2$ in $s_A$, and construct a new state $s'_1$, which has all $A$-resolutions except at $\ell_2$. In Figure \ref{fig:NonContractible}, the leftmost state is $s_1$, the center $s_A$, and the right $s'_1$.

\begin{figure}[!h]
	\centering
		\def\svgwidth{.5\columnwidth}
	%% Creator: Inkscape 1.0 (4035a4fb49, 2020-05-01), www.inkscape.org
%% PDF/EPS/PS + LaTeX output extension by Johan Engelen, 2010
%% Accompanies image file '2ntc.pdf' (pdf, eps, ps)
%%
%% To include the image in your LaTeX document, write
%%   \input{<filename>.pdf_tex}
%%  instead of
%%   \includegraphics{<filename>.pdf}
%% To scale the image, write
%%   \def\svgwidth{<desired width>}
%%   \input{<filename>.pdf_tex}
%%  instead of
%%   \includegraphics[width=<desired width>]{<filename>.pdf}
%%
%% Images with a different path to the parent latex file can
%% be accessed with the `import' package (which may need to be
%% installed) using
%%   \usepackage{import}
%% in the preamble, and then including the image with
%%   \import{<path to file>}{<filename>.pdf_tex}
%% Alternatively, one can specify
%%   \graphicspath{{<path to file>/}}
%% 
%% For more information, please see info/svg-inkscape on CTAN:
%%   http://tug.ctan.org/tex-archive/info/svg-inkscape
%%
\begingroup%
  \makeatletter%
  \providecommand\color[2][]{%
    \errmessage{(Inkscape) Color is used for the text in Inkscape, but the package 'color.sty' is not loaded}%
    \renewcommand\color[2][]{}%
  }%
  \providecommand\transparent[1]{%
    \errmessage{(Inkscape) Transparency is used (non-zero) for the text in Inkscape, but the package 'transparent.sty' is not loaded}%
    \renewcommand\transparent[1]{}%
  }%
  \providecommand\rotatebox[2]{#2}%
  \newcommand*\fsize{\dimexpr\f@size pt\relax}%
  \newcommand*\lineheight[1]{\fontsize{\fsize}{#1\fsize}\selectfont}%
  \ifx\svgwidth\undefined%
    \setlength{\unitlength}{340.33464567bp}%
    \ifx\svgscale\undefined%
      \relax%
    \else%
      \setlength{\unitlength}{\unitlength * \real{\svgscale}}%
    \fi%
  \else%
    \setlength{\unitlength}{\svgwidth}%
  \fi%
  \global\let\svgwidth\undefined%
  \global\let\svgscale\undefined%
  \makeatother%
  \begin{picture}(1,0.4958355)%
    \lineheight{1}%
    \setlength\tabcolsep{0pt}%
    \put(0,0){\includegraphics[width=\unitlength,page=1]{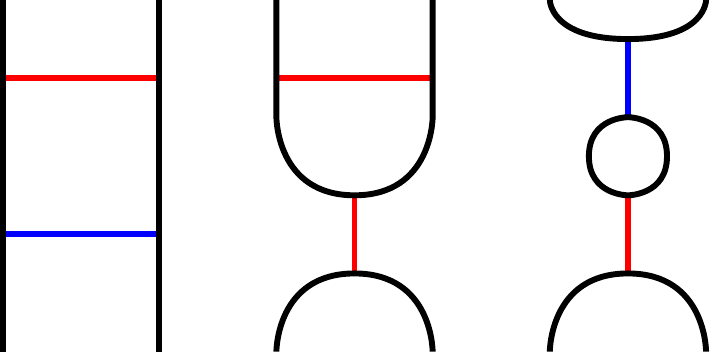}}%
    \put(0.04383128,0.3966687){\color[rgb]{0,0,0}\makebox(0,0)[lt]{\lineheight{1.25}\smash{\begin{tabular}[t]{l}$\ell_2$\end{tabular}}}}%
    \put(0.0416276,0.17629773){\color[rgb]{0,0,0}\makebox(0,0)[lt]{\lineheight{1.25}\smash{\begin{tabular}[t]{l}$\ell_1$\end{tabular}}}}%
  \end{picture}%
\endgroup%

	\caption{
		\textit{(Left)} The state $s_1$ gotten from $s_A$ by merging two noncontractible circles into a single contractible circle;
		\textit{(Center)} The all $A$ state, $s_A$;
		\textit{(Right)} The modified state, $s'_1$, with exactly one $A$ resolution.
	}
	\label{fig:NonContractible}
\end{figure}

Note that $s'_1$ must, then, introduce a new contractible state circle from $s_A$. In addition, it will have the same types of non-contractible state circles as $s_A$. Then, by Boden, Karimi, and Sikora \cite[Theorem 11]{ThickTaitConj}, any state of a reduced alternating diagram (and thus a weakly generalized alternating diagram) with one $B$ resolution must have, at most, the same number of contractible state circles as $s_A$, so we get a contradiction.

Finally, in the case of (3), we split a noncontractible state into a contractible circle and a noncontractible circle. Label the edges as before: $\ell_1$ is the edge first changed in the resolution change from $s_A$ to $s_1$, and $\ell_2$ the edge changed from $s_1$ to $s_2$. There are two possibilities for $\ell_2$: either it is parallel to $\ell_1$ in $s_A$, or it is not. If the two edges are not parallel, then $\ell_2$ must have it's endpoints both to the same side of $\ell_1$, and cut off a contractible circle disjoint from $\ell_1$. Then we can construct a new state, $s'_1$ that has all $A$ resolutions except for at $\ell_2$, where we have a $B$ resolution. As $\ell_2$ cuts off a contractible circle disjoint from $\ell_1$, $s'_1$ must have more contractible circles than $s_A$, a contradiction. So then $\ell_1$ and $\ell_2$ must be parallel in $s_A$.

Note that, in order for $s_2$ to contribute to the second highest degree, the second edge must be parallel to the first. If it isn't, it either does not introduce a new contractible circle (like in the first case), or we can find a state with 1 B resolution that has more contractible circles than $s_A$. We can do this same process for any state $s_k$ with $k$ B resolutions, and so all states that contribute to $|a_{m-1}|$ must have $B$ resolutions at only parallel edges.

Also note, if any two edges are parallel, they must be part of the same twist region. To see this, as parallel edges bound a disk on $F$, we get a disk whose boundary intersects the link diagram at exactly two crossings. As the diagram is twist reduced, this must mean both of these crossings, and thus the edges, belong to the same twist region.

So then we get a contribution to $|a_{m-1}|$ only when we make resolution changes to edges that, in $s_A$, are all parallel to each other. As we are asking about $|a_{m-1}|$, we will leave off the essential curve variable $X$ during our calculations. Each such state will contribute $(-1)^{|s_A|-2}(-1)^j$, where $j$ is the number of changed edges, and there are $\binom{k}{j}$ possible states with exactly $j$ resolution changes, where $k$ is the total number of parallel edges. Using the binomial theorem, we get that each family of parallel edges must contribute $(-1)^{|s_A|-1}$ to the second highest coefficient. As each family of parallel edges corresponds to a single edge in $G'_A$, and factoring in the contribution to the second coefficient from $s_A$, we get that
\begin{align*}
|a_{m-1}| &= | (-1)^{|s_A|_t-1} e'_A + (-1)^{|s_A|_t}(|s_A|_t)| \\
		  &= |(-1)^{|s_A|_t-1} ( e'_A - |s_A|_t)|\\
		  &= e'_A - |s_A|_t
\end{align*}
and so we are done with part (1) of the theorem.

To prove part (2), we can apply the same argument to the diagram $D^*$.

\end{proof}

We can now use Lemmas \ref{lem:FirstCoeff} and \ref{lem:SecondCoeff} to prove one of our main results:

\begin{thm}
\label{thm:TwistNumber}
Suppose that $\pi(L)$ is a twist-reduced, reduced alternating link projeciton with twist number $t_F(\pi(L))$ on a projection surface $F\subset M$ of genus at least 1.
Suppose also that the regions of $F\setminus \pi(L)$ are all disks. Then
\[|a_{m-1}| + |b_{n+1}| = t_F(\pi(L)) - \chi(F).\]
\end{thm}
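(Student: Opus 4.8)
The strategy is to feed the two formulas of Lemma~\ref{lem:SecondCoeff} into a pair of elementary counts. By Lemma~\ref{lem:SecondCoeff},
\[
|a_{m-1}| + |b_{n+1}| = (e'_A - |s_A|_t) + (e'_B - |s_B|_t) = (e'_A + e'_B) - (|s_A|_t + |s_B|_t),
\]
so if $c$ denotes the number of crossings of $\pi(L)$, it suffices to establish the two identities $|s_A|_t + |s_B|_t = \chi(F) + c$ and $e'_A + e'_B = c + t_F(\pi(L))$; subtracting the first from the second then yields $t_F(\pi(L)) - \chi(F)$ exactly.

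For the state-circle identity I would use the checkerboard coloring, which exists since $\pi(L)$ is alternating. Choose it so that at every crossing the $A$-smoothing joins the two white corners and the $B$-smoothing joins the two black corners. Since every region of $F\setminus\pi(L)$ is a disk, the all-$A$ state circles are exactly the boundary circles of the white regions (one per region, all contractible) and the all-$B$ state circles are the boundary circles of the black regions; this is the observation already used in the proof of Lemma~\ref{lem:FirstCoeff}. Hence $|s_A|_t$ is the number of white regions, $|s_B|_t$ the number of black regions, and $|s_A|_t + |s_B|_t$ the total number of regions of $F\setminus\pi(L)$. Viewing $\pi(L)$ as a $4$-valent CW decomposition of $F$ with $c$ vertices, $2c$ edges, and $|s_A|_t + |s_B|_t$ faces, Euler's formula gives $|s_A|_t + |s_B|_t = \chi(F) + c$.

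For the edge identity I would work one twist region at a time. Write the twist regions as $R_1,\dots,R_k$ with $k = t_F(\pi(L))$, where $R_i$ carries $c_i$ crossings, so $\sum_i c_i = c$. In the all-$A$ state a twist region $R_i$ appears in one of two ways: either as a family of $c_i$ mutually parallel edges of $G_A$ — and since two consecutive such edges cobound a bigon region, which is a disk, the entire family collapses to a single edge of $G'_A$ — or as a chain of $c_i$ edges through $c_i-1$ distinct intermediate state circles, no two of which are parallel, so that all $c_i$ survive in $G'_A$. Because $\pi(L)$ is alternating, exactly one of these two behaviours occurs in the all-$A$ state and the opposite one in the all-$B$ state; and because $\pi(L)$ is twist-reduced, no edge of one twist region is parallel in $F$ to an edge of another, so no further collapsing takes place. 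Thus each $R_i$ contributes $1$ to one of $e'_A,e'_B$ and $c_i$ to the other, and summing over $i$ gives $e'_A + e'_B = \sum_i (c_i + 1) = c + t_F(\pi(L))$. Combining this with the previous paragraph completes the argument.

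The step I expect to be the crux is this last edge count. One must verify that in every twist region exactly one of the two states yields a collapsing family of parallel edges while the other yields a chain all of whose edges persist in the reduced graph, and — the genuinely delicate point — that the hypotheses rule out the degenerate short twist regions (for instance a $2$-crossing region whose ``chain'' state also closes up and collapses) that would otherwise throw off the tally. I expect this exclusion to follow from twist-reducedness and reducedness together with the assumption that $F$ has genus at least $1$, the latter also being what makes Lemma~\ref{lem:SecondCoeff} available in the form used above (over $S^2$ a correction term of Dasbach--Lin type appears). This is the surface analogue of the classical twist region analysis of Lackenby and of Dasbach--Lin.
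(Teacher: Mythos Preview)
Your proposal is correct and follows essentially the same route as the paper: apply Lemma~\ref{lem:SecondCoeff}, then combine the Euler-characteristic count $|s_A|_t+|s_B|_t=c+\chi(F)$ with the reduced-edge identity $e'_A+e'_B=c+t_F(\pi(L))$. The only difference is cosmetic: the paper states the reduced-edge identity in the equivalent form $t_F(\pi(L))=e'_A+e'_B-c$ without further comment, whereas you spell out the twist-region-by-twist-region justification (your worries about degenerate short regions are not addressed in the paper and are implicitly absorbed into this asserted identity).
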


\begin{proof}
 First, note that we have

\[t_F(\pi(L)) = c - (c-e'_B) - (c-e'_B) = e'_A + e'_B - c.\]

Next, as the regions of $F\setminus \pi(L)$ are disks, we also have
\[|s_A|+|s_B| = c + 2 - 2g(F) = c + \chi(F).\]
It is important to note that, as $F\setminus \pi(L)$ are all disks, then $|s_A| = |s_{A}|_t$ and $|s_B| = |s_{B}|_t$, as there are only contractible circles. Then we get
\begin{align*}
|a_{m-1}| + |b_{n+1}| &= e'_A - |s_{A}|_t + e'_B - |s_{B}|_t\\
					  &= e'_A + e'_B - |s_A| - |s_B|\\
					  &= t_F(\pi(L)) + c - c - \chi(F)\\
					  &= t_F(\pi(L)) - \chi(F),
\end{align*}
and we are done.
\end{proof}

The proofs for Theorems \ref{thm:altVol} and \ref{thm:altTwst} follow their original proofs in \cite{GVSM}, with no modifications except for the new numbers. We will sketch their proofs here for completeness.

\begin{proof}[(Proof of \ref{thm:altTwst})]
We have already proved the twist number result above, in Theorem \ref{thm:TwistNumber}, as well as the last last statement about reduced alternating diagrams. So we now focus on the other two statements.

Let $S_A$ and $S_B$ be the checkerboard surfaces of $\pi(L)$, $X= M\setminus L$, and $M_A = X\cut S_A$ and $M_B = X\cut S_B$. As $\pi(L)$ is a weakly generalized alternating diagram, \cite{WGA} gives us
\begin{equation}
\label{eqn:guts}
\chi(\guts(M_A)) = \chi(F) + \frac{1}{2}\chi(\bndry M) - |s'_B|,
\end{equation}
where $|s'_B|$ is the number of non-bigon $B$ regions of $S_A$. Then note that
\[ \chi(F) = |s_A| - e'_A + |s'_B|.\]
In particular, by rearranging, we get
\[\chi(F) - |s'_B| = |s_A| - e'_A\]
Finally, by Lemma \ref{lem:SecondCoeff}, we get
\begin{equation}
\label{eqn:EulerChar}
\chi(F) - |s'_B| = -|a_{m-1}|.
\end{equation}
Then by combining Equations \ref{eqn:guts} and \ref{eqn:EulerChar}, we immediately get our result. The proof for $\guts(M_B)$ follows similarly, by swapping $A$ and $B$.

\end{proof}

\begin{proof}[(Proof of \ref{thm:altVol})]
Let $S_A$ and $S_B$ be the checkerboard surfaces of $\pi(L)$, $X= M\setminus L$, and $M_A = X\cut S_A$ and $M_B = X\cut S_B$. Then, by \cite{WGA}[Theorem 9.1], we have
\[ \vol(M) \ge -v_8 \chi(\guts(M_A)) \quad \mathrm{and} \quad \vol(M) \ge -v_8 \chi(\guts(M_B)).\]
Then, by \ref{thm:altTwst}, we immediately get our result.

\end{proof}

\bibliographystyle{amsplain}
\bibliography{ShortSMA}
\end{document}